\newtheorem*{theoremA}{Theorem A}
\newtheorem*{theoremB}{Theorem B}
\newtheorem*{theoremC}{Theorem C}
\newtheorem{theorem}{Theorem}
\newtheorem{lemma}[theorem]{Lemma}
\newtheorem{definition}[theorem]{Definition}
\newtheorem{corollary}[theorem]{Corollary}
\newenvironment{Proof}[1][Proof.]{\begin{trivlist}
\item[\hskip \labelsep {\bfseries #1}]}{\flushright
$\Box$\end{trivlist}}
\begin{document}
\noindent{\Large
The algebraic and geometric classification of nilpotent\\noncommutative Jordan algebras}
\footnote{
The work was supported by
 RFBR 18-31-20004; FAPESP  18/15712-0, 	19/00192-3. }

   \

   {\bf  Doston Jumaniyozov$^{a}$,
   Ivan   Kaygorodov$^{b}$ \&
   Abror Khudoyberdiyev$^{a, c}$}

\

{\tiny

$^{a}$ National University of Uzbekistan, Tashkent, Uzbekistan.

$^{b}$ CMCC, Universidade Federal do ABC, Santo Andr\'e, Brazil.

$^{c}$ Institute of Mathematics Academy of Science of Uzbekistan, Tashkent,  Uzbekistan.

\smallskip

   E-mail addresses:

\smallskip

Doston Jumaniyozov (jumaniyozovdoston50@gmail.com)

Ivan Kaygorodov (kaygorodov.ivan@gmail.com)

Abror Khudoyberdiyev (khabror@mail.ru)

}

\

\

\noindent{\bf Abstract}:
{\it We give algebraic and geometric classifications of complex $4$-dimensional nilpotent noncommutative Jordan algebras. Specifically, we find that, up to isomorphism, there are only $18$ non-isomorphic nontrivial nilpotent noncommutative Jordan algebras. The corresponding geometric variety is  determined by the Zariski closure of $3$ rigid algebras and $2$ one-parametric families of algebras.}

\

\noindent {\bf Keywords}:
{\it noncommutative Jordan algebras, nilpotent algebras, algebraic classification, central extension, geometric classification, degeneration.}

\

\noindent {\bf MSC2010}: 17A15, 17C55.

\section*{Introduction}
Algebraic classification (up to isomorphism) of algebras of small dimension from a certain variety defined by a family of polynomial identities is a classic problem in the theory of non-associative algebras. There are many results related to algebraic classification of small dimensional algebras in varieties of Jordan, Lie, Leibniz, Zinbiel and other algebras.
Another interesting approach of studying algebras of a fixed dimension is to study them from a geometric point of view (that is, to study degenerations and deformations of these algebras). The results in which the complete information about degenerations of a certain variety is obtained are generally referred to as the geometric classification of the algebras of these variety. There are many results related to algebraic and geometric classification of 
Jordan, Lie, Leibniz, Zinbiel and other algebras \cite{ack,  BC99,  cfk18,  ck13,  casas,  degr3,  usefi1,  degr2,  degr1,  demir, gkk,  GRH, ha16,  hac18,  ikv17,  ikv18,  kkk18,  KLP19,kpv19,  kppv,  kpv,  kv16,  kv17,  krh14,  S90,fkkv, kkp19}.

Noncommutative Jordan algebras were introduced by Albert in \cite{Alb}. He noted that the structure theories of alternative and Jordan algebras share so many nice properties that it is natural to conjecture that these algebras are members of a more general class with a similar theory. So he introduced the variety of noncommutative Jordan algebras defined by the Jordan identity and the flexibility identity. 
Namely, the variety of noncommutative Jordan algebras is defined by the following identities:
\[
\begin{array}{rcl}
x(yx) &=&  (xy)x,\\
x^2(yx) &=&  (x^2y)x\end{array} \]
The class of noncommutative Jordan algebras turned out to be vast: for example, apart from alternative and Jordan algebras it contains quasiassociative algebras, quadratic flexible algebras and anticommutative algebras. However, the structure theory of this class is far from being nice.
Nevertheless, a certain progress was made in the study of structure theory of noncommutative Jordan algebras and superalgebras 
 (see, for example \cite{Sch,McC,McC2,KLP17,popov,spa}). 

In this paper, our goal is to obtain a complete algebraic and geometric description of the variety of all $4$-dimensional nilpotent noncommutative Jordan algebras over the complex field. To do so, we first determine all such $4$-dimensional algebra structures, up to isomorphism (what we call the algebraic classification), and then proceed to determine the geometric properties of the corresponding variety, namely its dimension and description of the irreducible components (the geometric classification). 
Note that, every $2$-step nilpotent algebra is a noncommutative Jordan algebra
and the algebraic and geometric classification of complex $4$-dimensional $2$-step nilpotent algebras can be found in \cite{kppv}.

Our main results are summarized below. 

\begin{theoremA}
There are only $18$ non-isomorphic complex $4$-dimensional nontrivial nilpotent noncommutative Jordan algebras, described explicitly in Appendix~A.
\end{theoremA}

From the geometric point of view, in many cases the irreducible components of the variety are determined by the rigid algebras, i.e., algebras whose orbit closure is an irreducible component. It is worth mentioning that this is not always the case and already in \cite{f68}, Flanigan had shown that the variety of $3$-dimensional nilpotent associative algebras has an irreducible component which does not contain any rigid algebras --- it is instead defined by the closure of a union of a one-parameter family of algebras. Here, we encounter a different situation. Informally, although Theorem~B shows that there are three  {\it generic} algebras and 
two  {\it generic} parametric families in the variety of $4$-dimensional nilpotent noncommutative Jordan algebras. 

\begin{theoremB}
The variety of complex $4$-dimensional  nilpotent noncommutative Jordan algebras has dimension $14$. It is definded by  $3$ rigid algebras and two one-parametric families of algebras, and can be described as the closure of the union of $\mathrm{GL}_4(\mathbb{C})$-orbits of the following algebras ($\alpha\in\mathbb{C}$):
\begin{longtable}{llllllll}

$ \mathfrak{N}_2(\alpha)$  &$:$& $e_1e_1 = e_3$ & $e_1e_2 = e_4$  &$e_2e_1 = -\alpha e_3$ & $e_2e_2 = -e_4$ \\
$\mathfrak{N}_3(\alpha)$  &$:$& $e_1e_1 = e_4$ & $e_1e_2 = \alpha e_4$  & $e_2e_1 = -\alpha e_4$ & $e_2e_2 = e_4$\\         
${\mathcal J}^4_{07}$&$:$& $e_1 e_2 = e_3$ & $e_1 e_3=e_4$  & $e_2 e_1=e_3+e_4$ & $e_2 e_3=e_4$ & $e_3 e_1=e_4$ & $e_3 e_2=e_4$\\
         ${\mathcal J}^4_{17}$&$:$& $e_1 e_1 = e_4$ & $e_1 e_2 = e_3$ & $e_2 e_1=-e_3$ & $e_1 e_3=e_4$ & $e_2 e_2=e_4$ & $e_3 e_1=-e_4$\\
        ${\mathcal J}^4_{18}$&$:$& $e_1 e_1 = e_2$ &$e_1 e_2 = e_3$ & $e_1 e_3=e_4$ & $e_2 e_1=e_3$ & $e_2 e_2=e_4$ & $e_3 e_1=e_4.$
\end{longtable}

\end{theoremB}

\section{The algebraic classification of nilpotent noncommutative Jordan algebras}
\subsection{Method of classification of nilpotent algebras}

Throughout this paper, we use the notations and methods well written in \cite{hac16,cfk18},
which we have adapted for the noncommutative Jordan case with some modifications.
Further in this section we give some important definitions.

Let $({\bf A}, \cdot)$ be a noncommutative Jordan  algebra over  $\mathbb C$
and $\mathbb V$ be a vector space over ${\mathbb C}$. The $\mathbb C$-linear space ${\rm Z^{2}}\left(
\bf A,\mathbb V \right) $ is defined as the set of all  bilinear maps $\theta  \colon {\bf A} \times {\bf A} \longrightarrow {\mathbb V}$ such that
\[\theta(x, yz) + \theta(z, yx) = \theta(xy, z) + \theta(zy, x)\]
\[\theta(xt, yz) + \theta(tx, yz) + \theta(tz, yx) + \theta(xz, yt) + \theta(zt, yx) + \theta(zx, yt) = \]
\[
\theta((xt)y, z) + \theta((tx)y, z) + \theta((xz)y, t) + \theta((tz)y, x) +
\theta((zx)y, t) + \theta((zt)y, x)\]

These elements will be called {\it cocycles}. For a
linear map $f$ from $\bf A$ to  $\mathbb V$, if we define $\delta f\colon {\bf A} \times
{\bf A} \longrightarrow {\mathbb V}$ by $\delta f  (x,y ) =f(xy )$, then $\delta f\in {\rm Z^{2}}\left( {\bf A},{\mathbb V} \right) $. We define ${\rm B^{2}}\left({\bf A},{\mathbb V}\right) =\left\{ \theta =\delta f\ : f\in {\rm Hom}\left( {\bf A},{\mathbb V}\right) \right\} $.
We define the {\it second cohomology space} ${\rm H^{2}}\left( {\bf A},{\mathbb V}\right) $ as the quotient space ${\rm Z^{2}}
\left( {\bf A},{\mathbb V}\right) \big/{\rm B^{2}}\left( {\bf A},{\mathbb V}\right) $.

\

Let $\operatorname{Aut}({\bf A}) $ be the automorphism group of  ${\bf A} $ and let $\phi \in \operatorname{Aut}({\bf A})$. For $\theta \in
{\rm Z^{2}}\left( {\bf A},{\mathbb V}\right) $ define  the action of the group $\operatorname{Aut}({\bf A}) $ on ${\rm H^{2}}\left( {\bf A},{\mathbb V}\right) $ by $\phi \theta (x,y)
=\theta \left( \phi \left( x\right) ,\phi \left( y\right) \right) $.  It is easy to verify that
 ${\rm B^{2}}\left( {\bf A},{\mathbb V}\right) $ is invariant under the action of $\operatorname{Aut}({\bf A}).$
 So, we have an induced action of  $\operatorname{Aut}({\bf A})$  on ${\rm H^{2}}\left( {\bf A},{\mathbb V}\right)$.

\

Let $\bf A$ be a noncommutative Jordan  algebra of dimension $m$ over  $\mathbb C$ and ${\mathbb V}$ be a $\mathbb C$-vector
space of dimension $k$. For $\theta \in {\rm Z^{2}}\left(
{\bf A},{\mathbb V}\right) $, define on the linear space ${\bf A}_{\theta } = {\bf A}\oplus {\mathbb V}$ the
bilinear product `` $\left[ -,-\right] _{{\bf A}_{\theta }}$'' by $\left[ x+x^{\prime },y+y^{\prime }\right] _{{\bf A}_{\theta }}=
 xy +\theta(x,y) $ for all $x,y\in {\bf A},x^{\prime },y^{\prime }\in {\mathbb V}$.
The algebra ${\bf A}_{\theta }$ is called an $k$-{\it dimensional central extension} of ${\bf A}$ by ${\mathbb V}$. One can easily check that ${\bf A_{\theta}}$ is a noncommutative Jordan
algebra if and only if $\theta \in {\rm Z^2}({\bf A}, {\mathbb V})$.

Call the
set $\operatorname{Ann}(\theta)=\left\{ x\in {\bf A}:\theta \left( x, {\bf A} \right)+ \theta \left({\bf A} ,x\right) =0\right\} $
the {\it annihilator} of $\theta $. We recall that the {\it annihilator} of an  algebra ${\bf A}$ is defined as
the ideal $\operatorname{Ann}(  {\bf A} ) =\left\{ x\in {\bf A}:  x{\bf A}+ {\bf A}x =0\right\}$. Observe
 that
$\operatorname{Ann}\left( {\bf A}_{\theta }\right) =(\operatorname{Ann}(\theta) \cap\operatorname{Ann}({\bf A}))
 \oplus {\mathbb V}$.

\

The following result shows that every algebra with a non-zero annihilator is a central extension of a smaller-dimensional algebra.

\begin{lemma}
Let ${\bf A}$ be an $n$-dimensional noncommutative Jordan algebra such that $\dim (\operatorname{Ann}({\bf A}))=m\neq0$. Then there exists, up to isomorphism, a unique $(n-m)$-dimensional noncommutative Jordan  algebra ${\bf A}'$ and a bilinear map $\theta \in {\rm Z^2}({\bf A}, {\mathbb V})$ with $\operatorname{Ann}({\bf A})\cap\operatorname{Ann}(\theta)=0$, where $\mathbb V$ is a vector space of dimension m, such that ${\bf A} \cong {{\bf A}'}_{\theta}$ and
 ${\bf A}/\operatorname{Ann}({\bf A})\cong {\bf A}'$.
\end{lemma}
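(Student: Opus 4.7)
The plan is to build the pair $({\bf A}', \theta)$ explicitly from ${\bf A}$ and its annihilator, then check existence, the non-trivial intersection condition, and uniqueness.

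First I would set ${\bf A}' := {\bf A}/\operatorname{Ann}({\bf A})$, which is a noncommutative Jordan algebra of dimension $n-m$ because $\operatorname{Ann}({\bf A})$ is a two-sided ideal and the defining identities are preserved under passage to quotients. Let ${\mathbb V}$ be a vector space of dimension $m$ fixed with a linear isomorphism $\iota\colon \operatorname{Ann}({\bf A}) \to {\mathbb V}$. Choose an arbitrary linear section $s\colon {\bf A}' \to {\bf A}$ of the canonical projection $\pi\colon {\bf A} \to {\bf A}'$, so that $\pi\circ s = \operatorname{id}_{{\bf A}'}$. Then define
\[
\theta(\bar x,\bar y) := \iota\bigl(s(\bar x)\,s(\bar y) - s(\bar x\,\bar y)\bigr) \in {\mathbb V}.
\]
The element inside $\iota$ lies in $\operatorname{Ann}({\bf A})$ because its image under $\pi$ vanishes, so $\theta$ is well defined.

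Next I would verify that $\theta \in {\rm Z^{2}}({\bf A}',{\mathbb V})$, that is, $\theta$ satisfies the two cocycle identities stated earlier in the excerpt (the linearization of flexibility and of the Jordan identity). Substituting the definition, each cocycle condition reduces to an identity of the form $\iota$ applied to a combination of products in ${\bf A}$, and since ${\bf A}$ itself satisfies flexibility and the Jordan identity, the combinations vanish in ${\bf A}$. This is the main routine step but it is the main obstacle, as one must be careful that the linearization is exactly what the cocycle conditions encode. Then the linear isomorphism
\[
\Phi\colon {\bf A}'_{\theta} \longrightarrow {\bf A},\qquad \Phi(\bar x + v) = s(\bar x) + \iota^{-1}(v),
\]
is an algebra isomorphism by construction of $\theta$, giving ${\bf A} \cong {\bf A}'_{\theta}$, and by construction ${\bf A}/\operatorname{Ann}({\bf A}) \cong {\bf A}'$.

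To check that $\operatorname{Ann}({\bf A}')\cap\operatorname{Ann}(\theta)=0$: using the identity $\operatorname{Ann}\bigl({\bf A}'_{\theta}\bigr) = \bigl(\operatorname{Ann}({\bf A}')\cap\operatorname{Ann}(\theta)\bigr)\oplus {\mathbb V}$ (recalled in the excerpt) together with the isomorphism ${\bf A}\cong {\bf A}'_{\theta}$, a nonzero element in $\operatorname{Ann}({\bf A}')\cap\operatorname{Ann}(\theta)$ would produce an element of $\operatorname{Ann}({\bf A})$ of dimension exceeding $m$, contradicting $\dim\operatorname{Ann}({\bf A})=m$.

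Finally, for uniqueness, suppose ${\bf A} \cong {\bf B}_{\psi}$ for some $(n-m)$-dimensional noncommutative Jordan algebra $\bf B$ and $\psi\in {\rm Z^{2}}({\bf B},{\mathbb V})$ with $\operatorname{Ann}({\bf B})\cap\operatorname{Ann}(\psi)=0$. Again using $\operatorname{Ann}({\bf B}_{\psi}) = (\operatorname{Ann}({\bf B})\cap\operatorname{Ann}(\psi))\oplus{\mathbb V} = {\mathbb V}$, we get ${\bf B} \cong {\bf B}_{\psi}/\operatorname{Ann}({\bf B}_{\psi}) \cong {\bf A}/\operatorname{Ann}({\bf A}) \cong {\bf A}'$, which establishes uniqueness up to isomorphism. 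The main obstacle, as noted, is the cocycle verification, which I expect to be a direct but tedious linear-algebraic expansion of the two defining identities.
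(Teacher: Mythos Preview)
Your proof is correct and follows essentially the same construction as the paper: the paper picks a linear complement ${\bf A}'$ of $\operatorname{Ann}({\bf A})$ inside ${\bf A}$ with the projection $P$, while you take the quotient with a section $s$, and these are dual descriptions of the same data.

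One point worth noting: what you flag as the ``main obstacle'' (the direct cocycle verification) is avoided entirely in the paper by reversing the order of the last two steps. Your map $\Phi$ is an algebra isomorphism purely by construction of $\theta$, independently of any cocycle condition; hence ${\bf A}'_\theta \cong {\bf A}$ is noncommutative Jordan, and the earlier observation that ${\bf A}'_\theta$ is noncommutative Jordan if and only if $\theta\in{\rm Z}^2({\bf A}',{\mathbb V})$ then gives $\theta\in{\rm Z}^2$ for free. So the tedious expansion you anticipate is unnecessary.
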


\begin{proof}
Let ${\bf A}'$ be a linear complement of $\operatorname{Ann}({\bf A})$ in ${\bf A}$. Define a linear map $P \colon {\bf A} \longrightarrow {\bf A}'$ by $P(x+v)=x$ for $x\in {\bf A}'$ and $v\in\operatorname{Ann}({\bf A})$, and define a multiplication on ${\bf A}'$ by $[x, y]_{{\bf A}'}=P(x y)$ for $x, y \in {\bf A}'$.
For $x, y \in {\bf A}$, we have
\[P(xy)=P((x-P(x)+P(x))(y- P(y)+P(y)))=P(P(x) P(y))=[P(x), P(y)]_{{\bf A}'}. \]

Since $P$ is a homomorphism $P({\bf A})={\bf A}'$ is a noncommutative Jordan algebra and
 ${\bf A}/\operatorname{Ann}({\bf A})\cong {\bf A}'$, which gives us the uniqueness. Now, define the map $\theta \colon {\bf A}' \times {\bf A}' \longrightarrow\operatorname{Ann}({\bf A})$ by $\theta(x,y)=xy- [x,y]_{{\bf A}'}$.
  Thus, ${\bf A}'_{\theta}$ is ${\bf A}$ and therefore $\theta \in {\rm Z^2}({\bf A}, {\mathbb V})$ and $\operatorname{Ann}({\bf A})\cap\operatorname{Ann}(\theta)=0$.
\end{proof}

\begin{definition}
Let ${\bf A}$ be an algebra and $I$ be a subspace of $\operatorname{Ann}({\bf A})$. If ${\bf A}={\bf A}_0 \oplus I$
then $I$ is called an {\it annihilator component} of ${\bf A}$.
\end{definition}
\begin{definition}
A central extension of an algebra $\bf A$ without annihilator component is called a {\it non-split central extension}.
\end{definition}

Our task is to find all central extensions of an algebra $\bf A$ by
a space ${\mathbb V}$.  In order to solve the isomorphism problem we need to study the
action of $\operatorname{Aut}({\bf A})$ on ${\rm H^{2}}\left( {\bf A},{\mathbb V}
\right) $. To do that, let us fix a basis $e_{1},\ldots ,e_{s}$ of ${\mathbb V}$, and $
\theta \in {\rm Z^{2}}\left( {\bf A},{\mathbb V}\right) $. Then $\theta $ can be uniquely
written as $\theta \left( x,y\right) =
\displaystyle \sum_{i=1}^{s} \theta _{i}\left( x,y\right) e_{i}$, where $\theta _{i}\in
{\rm Z^{2}}\left( {\bf A},\mathbb C\right) $. Moreover, $\operatorname{Ann}(\theta)=\operatorname{Ann}(\theta _{1})\cap\operatorname{Ann}(\theta _{2})\cap\cdots \cap\operatorname{Ann}(\theta _{s})$. Furthermore, $\theta \in
{\rm B^{2}}\left( {\bf A},{\mathbb V}\right) $\ if and only if all $\theta _{i}\in {\rm B^{2}}\left( {\bf A},
\mathbb C\right) $.
It is not difficult to prove (see \cite[Lemma 13]{hac16}) that given a noncommutative Jordan algebra ${\bf A}_{\theta}$, if we write as
above $\theta \left( x,y\right) = \displaystyle \sum_{i=1}^{s} \theta_{i}\left( x,y\right) e_{i}\in {\rm Z^{2}}\left( {\bf A},{\mathbb V}\right) $ and
$\operatorname{Ann}(\theta)\cap \operatorname{Ann}\left( {\bf A}\right) =0$, then ${\bf A}_{\theta }$ has an
annihilator component if and only if $\left[ \theta _{1}\right] ,\left[
\theta _{2}\right] ,\ldots ,\left[ \theta _{s}\right] $ are linearly
dependent in ${\rm H^{2}}\left( {\bf A},\mathbb C\right) $.

\;

Let ${\mathbb V}$ be a finite-dimensional vector space over $\mathbb C$. The {\it Grassmannian} $G_{k}\left( {\mathbb V}\right) $ is the set of all $k$-dimensional
linear subspaces of $ {\mathbb V}$. Let $G_{s}\left( {\rm H^{2}}\left( {\bf A},\mathbb C\right) \right) $ be the Grassmannian of subspaces of dimension $s$ in
${\rm H^{2}}\left( {\bf A},\mathbb C\right) $. There is a natural action of $\operatorname{Aut}({\bf A})$ on $G_{s}\left( {\rm H^{2}}\left( {\bf A},\mathbb C\right) \right) $.
Let $\phi \in \operatorname{Aut}({\bf A})$. For $W=\left\langle
\left[ \theta _{1}\right] ,\left[ \theta _{2}\right] ,\dots,\left[ \theta _{s}
\right] \right\rangle \in G_{s}\left( {\rm H^{2}}\left( {\bf A},\mathbb C
\right) \right) $ define $\phi W=\left\langle \left[ \phi \theta _{1}\right]
,\left[ \phi \theta _{2}\right] ,\dots,\left[ \phi \theta _{s}\right]
\right\rangle $. We denote the orbit of $W\in G_{s}\left(
{\rm H^{2}}\left( {\bf A},\mathbb C\right) \right) $ under the action of $\operatorname{Aut}({\bf A})$ by $\operatorname{Orb}(W)$. Given
\[
W_{1}=\left\langle \left[ \theta _{1}\right] ,\left[ \theta _{2}\right] ,\dots,
\left[ \theta _{s}\right] \right\rangle ,W_{2}=\left\langle \left[ \vartheta
_{1}\right] ,\left[ \vartheta _{2}\right] ,\dots,\left[ \vartheta _{s}\right]
\right\rangle \in G_{s}\left( {\rm H^{2}}\left( {\bf A},\mathbb C\right)
\right),
\]
we easily have that if $W_{1}=W_{2}$, then $ \bigcap\limits_{i=1}^{s}\operatorname{Ann}(\theta _{i})\cap \operatorname{Ann}\left( {\bf A}\right) = \bigcap\limits_{i=1}^{s}
\operatorname{Ann}(\vartheta _{i})\cap\operatorname{Ann}( {\bf A}) $, and therefore we can introduce
the set
\[
{\bf T}_{s}({\bf A}) =\left\{ W=\left\langle \left[ \theta _{1}\right] ,
\left[ \theta _{2}\right] ,\dots,\left[ \theta _{s}\right] \right\rangle \in
G_{s}\left( {\rm H^{2}}\left( {\bf A},\mathbb C\right) \right) : \bigcap\limits_{i=1}^{s}\operatorname{Ann}(\theta _{i})\cap\operatorname{Ann}({\bf A}) =0\right\},
\]
which is stable under the action of $\operatorname{Aut}({\bf A})$.

\

Now, let ${\mathbb V}$ be an $s$-dimensional linear space and let us denote by
${\bf E}\left( {\bf A},{\mathbb V}\right) $ the set of all {\it non-split $s$-dimensional central extensions} of ${\bf A}$ by
${\mathbb V}$. By above, we can write
\[
{\bf E}\left( {\bf A},{\mathbb V}\right) =\left\{ {\bf A}_{\theta }:\theta \left( x,y\right) = \sum_{i=1}^{s}\theta _{i}\left( x,y\right) e_{i} \ \ \text{and} \ \ \left\langle \left[ \theta _{1}\right] ,\left[ \theta _{2}\right] ,\dots,
\left[ \theta _{s}\right] \right\rangle \in {\bf T}_{s}({\bf A}) \right\} .
\]
We also have the following result, which can be proved as in \cite[Lemma 17]{hac16}.

\begin{lemma}
 Let ${\bf A}_{\theta },{\bf A}_{\vartheta }\in {\bf E}\left( {\bf A},{\mathbb V}\right) $. Suppose that $\theta \left( x,y\right) =  \displaystyle \sum_{i=1}^{s}
\theta _{i}\left( x,y\right) e_{i}$ and $\vartheta \left( x,y\right) =
\displaystyle \sum_{i=1}^{s} \vartheta _{i}\left( x,y\right) e_{i}$.
Then the noncommutative Jordan algebras ${\bf A}_{\theta }$ and ${\bf A}_{\vartheta } $ are isomorphic
if and only if
$$\operatorname{Orb}\left\langle \left[ \theta _{1}\right] ,
\left[ \theta _{2}\right] ,\dots,\left[ \theta _{s}\right] \right\rangle =
\operatorname{Orb}\left\langle \left[ \vartheta _{1}\right] ,\left[ \vartheta
_{2}\right] ,\dots,\left[ \vartheta _{s}\right] \right\rangle .$$
\end{lemma}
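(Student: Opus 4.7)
The plan is to mimic \cite[Lemma 17]{hac16} in the noncommutative Jordan setting; since the shape of ${\rm Z^{2}}({\bf A}, {\mathbb V})$ has been chosen precisely so that ${\bf A}_{\theta}$ automatically lies in the variety, the defining identities play no further role and the argument reduces to bookkeeping on the central extension. I would split the biconditional into its two directions and organize both around the decomposition $\Phi(x) = \psi(x) + h(x)$, where $\psi \colon {\bf A} \to {\bf A}$ and $h \colon {\bf A} \to {\mathbb V}$.

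For the ``$\Leftarrow$'' direction, suppose $\operatorname{Orb}(W_{\theta}) = \operatorname{Orb}(W_{\vartheta})$ with $W_{\theta} = \langle [\theta_1],\dots,[\theta_s]\rangle$ and $W_{\vartheta} = \langle [\vartheta_1],\dots,[\vartheta_s]\rangle$. Then some $\psi \in \operatorname{Aut}({\bf A})$ satisfies $\psi W_{\vartheta} = W_{\theta}$, which produces an invertible matrix $(b_{ij}) \in \mathrm{GL}_{s}(\mathbb{C})$ and linear maps $h_j \colon {\bf A} \to \mathbb{C}$ with $\psi\vartheta_j = \sum_{i} b_{ij}\theta_i + \delta h_j$. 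Define $\Phi \colon {\bf A}_{\theta} \to {\bf A}_{\vartheta}$ by $\Phi(x) = \psi(x) + \sum_{j} h_j(x) e_j$ for $x \in {\bf A}$ and $\Phi(e_i) = \sum_{j} b_{ij} e_j$ on ${\mathbb V}$, extended linearly. Comparing $\Phi$ of the product $xy + \theta(x,y)$ in ${\bf A}_{\theta}$ with $\Phi(x)\Phi(y)$ in ${\bf A}_{\vartheta}$ reduces to the cocycle identity $\psi\vartheta_j - \delta h_j = \sum_{i} b_{ij}\theta_i$, which holds by hypothesis; bijectivity follows from invertibility of $\psi$ and of $(b_{ij})$.

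For the ``$\Rightarrow$'' direction, let $\Phi \colon {\bf A}_{\theta} \to {\bf A}_{\vartheta}$ be an isomorphism. The non-split hypothesis gives $\operatorname{Ann}(\theta) \cap \operatorname{Ann}({\bf A}) = 0$, and combined with the observation $\operatorname{Ann}({\bf A}_{\theta}) = (\operatorname{Ann}(\theta) \cap \operatorname{Ann}({\bf A})) \oplus {\mathbb V}$ from the excerpt this yields $\operatorname{Ann}({\bf A}_{\theta}) = {\mathbb V}$, and similarly $\operatorname{Ann}({\bf A}_{\vartheta}) = {\mathbb V}$. Since any isomorphism carries annihilator onto annihilator, $\Phi({\mathbb V}) = {\mathbb V}$. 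Writing $\Phi(x) = \psi(x) + h(x)$ for $x \in {\bf A}$, multiplicativity splits into ${\bf A}$- and ${\mathbb V}$-components: the former gives $\psi \in \operatorname{Aut}({\bf A})$, while the latter, after expressing $\Phi|_{\mathbb V}$ as a matrix $(b_{ij})$ and $h = \sum_j h_j e_j$, becomes $\psi\vartheta_j = \sum_{i} b_{ij}\theta_i + \delta h_j$. This is precisely $\psi W_{\vartheta} = W_{\theta}$ in $G_s({\rm H^{2}}({\bf A}, \mathbb{C}))$, so the orbits coincide.

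The only genuine content is the conclusion $\Phi({\mathbb V}) = {\mathbb V}$ in the ``$\Rightarrow$'' step; this is where the non-split hypothesis is indispensable, for without it an isomorphism could mix the ${\bf A}$ and ${\mathbb V}$ summands and there would be no induced automorphism $\psi$ of the quotient. Once that is in hand, everything else is symbolic manipulation that does not depend on which variety ${\bf A}$ lives in, only that $\theta$ and $\vartheta$ lie in ${\rm Z}^{2}$ so that ${\bf A}_{\theta}$ and ${\bf A}_{\vartheta}$ are again noncommutative Jordan.
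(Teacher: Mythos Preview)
Your proposal is correct and follows precisely the approach the paper itself takes: the paper does not prove this lemma but simply states it ``can be proved as in \cite[Lemma~17]{hac16}'', and you have faithfully reproduced that argument, correctly identifying that the noncommutative Jordan identities play no role beyond ensuring $\theta,\vartheta\in{\rm Z}^{2}$. The key step---using the membership in ${\bf E}({\bf A},{\mathbb V})$ to force $\operatorname{Ann}({\bf A}_{\theta})={\mathbb V}$ and hence $\Phi({\mathbb V})={\mathbb V}$---is exactly the point the paper's framework is set up to deliver.
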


This shows that there exists a one-to-one correspondence between the set of $\operatorname{Aut}({\bf A})$-orbits on ${\bf T}_{s}\left( {\bf A}\right) $ and the set of
isomorphism classes of ${\bf E}\left( {\bf A},{\mathbb V}\right) $. Consequently we have a
procedure that allows us, given a noncommutative Jordan algebra ${\bf A}'$ of
dimension $n-s$, to construct all non-split central extensions of ${\bf A}'$. This procedure is:

\; \;

{\centerline {\textsl{Procedure}}}

\begin{enumerate}
\item For a given noncommutative Jordan algebra ${\bf A}'$ of dimension $n-s $, determine ${\rm H^{2}}( {\bf A}',\mathbb {C}) $, $\operatorname{Ann}({\bf A}')$ and $\operatorname{Aut}({\bf A}')$.

\item Determine the set of $\operatorname{Aut}({\bf A}')$-orbits on $T_{s}({\bf A}') $.

\item For each orbit, construct the noncommutative Jordan algebra associated with a
representative of it.
\end{enumerate}

\

\subsection{Notations}
Let ${\bf A}$ be a noncommutative Jordan algebra with
a basis $e_{1},e_{2},\dots,e_{n}$. Then by $\Delta _{ij}$\ we denote the
bilinear form
$\Delta _{ij} \colon {\bf A}\times {\bf A}\longrightarrow \mathbb C$
with $\Delta _{ij}\left( e_{l},e_{m}\right) = \delta_{il}\delta_{jm}$.
Then the set $\left\{ \Delta_{ij}:1\leq i, j\leq n\right\} $ is a basis for the space of
the bilinear forms on ${\bf A}$. Then every $\theta \in
{\rm Z^{2}}\left( {\bf A},\mathbb C\right) $ can be uniquely written as $
\theta = \displaystyle \sum_{1\leq i,j\leq n} c_{ij}\Delta _{{i}{j}}$, where $
c_{ij}\in \mathbb C$.
Let us fix the following notations:

$$\begin{array}{lll}
{\mathcal J}^{i*}_j& \mbox{---}& j \mbox{th }i\mbox{-dimensional nilpotent  noncommutative Jordan algebra with identity $xyz=0$} \\
{\mathcal J}^i_j& \mbox{---}& j \mbox{th }i\mbox{-dimensional nilpotent "pure" noncommutative Jordan algebra (without identity $xyz=0$)} \\
{\mathfrak{N}}_i& \mbox{---}& i\mbox{-dimensional algebra with zero product} \\
({\bf A})_{i,j}& \mbox{---}& j \mbox{th }i\mbox{-dimensional central extension of }\bf A. \\
\end{array}$$

\subsection{The algebraic classification of  $3$-dimensional nilpotent noncommutative Jordan algebras}
There are no nontrivial $1$-dimensional nilpotent Jordan algebras.
There is only one nontrivial $2$-dimensional nilpotent Jordan algebra
(it is the non-split central extension of $1$-dimensional algebra with zero product):

$$\begin{array}{ll llll}
{\mathcal J}^{2*}_{01} &:& (\mathfrak{N}_1)_{2,1} &:& e_1 e_1 = e_2.\\
\end{array}$$

Thanks to \cite{cfk18} we have the description of all central extensions of  ${\mathcal J}^{2*}_{01}$ and $\mathfrak{N}_2$.
Choosing the Jordan algebras from the central extensions of these algebras,
we have the classification of all non-split $3$-dimensional nilpotent Jordan algebras:

$$\begin{array}{ll llllllllllll}
{\mathcal J}^{3*}_{02} &:& (\mathfrak{N}_2)_{3,1} &:& e_1 e_2 = e_3, &  e_2 e_1=e_3; \\
{\mathcal J}^{3*}_{03} &:& (\mathfrak{N}_2)_{3,2} &:& e_1 e_2=e_3, & e_2 e_1=-e_3;   \\
{\mathcal J}^{3*}_{04}(\lambda) &:& (\mathfrak{N}_2)_{3,3} &:&
e_1 e_1 = \lambda e_3,  & e_2 e_1=e_3,  & e_2 e_2=e_3; \\
{\mathcal J}^3_{01} &:& ({\mathcal J}^{2*}_{01} )_{3,1} &:& e_1 e_1 = e_2, & e_1 e_2=e_3, & e_2 e_1= e_3.
\end{array} $$

\subsection{$1$-dimensional central extensions of $3$-dimensional  nilpotent noncommutative Jordan algebras}
\label{centrext}
\subsubsection{The description of second cohomology spaces of  $3$-dimensional nilpotent noncommutative Jordan algebras:}

\
In the following table we give the description of the second cohomology space of  $3$-dimensional nilpotent noncommutative Jordan algebras

{\tiny
$$
\begin{array}{|l|l|l|l|}
\hline
\bf A  & {\rm Z^{2}}\left( {\bf A}\right)  & {\rm B^2}({\bf A}) & {\rm H^2}({\bf A}) \\
\hline
\hline
{\mathcal J}^{3*}_{01} &  \langle
\Delta_{11},\Delta_{12}+\Delta_{21},\Delta_{13},\Delta_{23}+\Delta_{32}, \Delta_{31}, \Delta_{33} \rangle
&\langle \Delta_{11} \rangle&
\langle [\Delta_{12}]+[\Delta_{21}],[\Delta_{13}],[\Delta_{23}+\Delta_{32}], [\Delta_{31}], [\Delta_{33}] \rangle\\
\hline

{\mathcal J}^{3*}_{02} &  \langle \Delta_{11}, \Delta_{12},  \Delta_{13}+\Delta_{31}, \Delta_{21}, \Delta_{22}, \Delta_{23}+\Delta_{32} \rangle
& \langle \Delta_{12}+\Delta_{21} \rangle &  \langle [\Delta_{11}],  [\Delta_{13}]+[\Delta_{31}], [\Delta_{21}], [\Delta_{22}], [\Delta_{23}]+[\Delta_{32}] \rangle \\
\hline
{\mathcal J}^{3*}_{03} & \langle \Delta_{11},\Delta_{12}, \Delta_{13}-\Delta_{31},  \Delta_{21}, \Delta_{22}, \Delta_{23}-\Delta_{32} \rangle
& \langle \Delta_{12}-\Delta_{21} \rangle &  \langle [\Delta_{11}], [\Delta_{13}]-[\Delta_{31}], [\Delta_{21}], [\Delta_{22}], [\Delta_{23}]-[\Delta_{32}] \rangle \\
\hline
{\mathcal J}^{3*}_{04}(\lambda) & \langle \Delta_{11}, \Delta_{12}, \Delta_{21}, \Delta_{22} \rangle & \langle \lambda\Delta_{11}+\Delta_{21} + \Delta_{22} \rangle &  \langle [\Delta_{11}], [\Delta_{12}], [\Delta_{22}] \rangle \\

\hline
{\mathcal J}^3_{01} &\langle \Delta_{11}, \Delta_{12}+\Delta_{21}, \Delta_{13}+\Delta_{22}+\Delta_{31} \rangle& \langle \Delta_{11}, \Delta_{12}+\Delta_{21} \rangle &  \langle [\Delta_{13}]+[\Delta_{22}]+[\Delta_{31}] \rangle \\
\hline

\end{array}$$
}
where ${\mathcal J}^{3*}_{01}={\mathcal J}^{2*}_{01}\oplus{\mathbb C e_3}.$

\subsubsection{Central extensions of ${\mathcal J}^{3*}_{01}$}

Let us use the following notations:
\[
\nabla_1=[\Delta_{12}]+[\Delta_{21}], \quad \nabla_2=[\Delta_{13}], \quad  \nabla_3=[\Delta_{23}]+[\Delta_{32}], \quad \nabla_4=[\Delta_{31}], \quad \nabla_{5}=[\Delta_{33}]. \]

The automorphism group of ${\mathcal J}^{3*}_{01}$ consists of invertible matrices of the form

\[\phi=\begin{pmatrix}
x & 0 & 0\\
y & x^2 & u\\
z & 0 & v
\end{pmatrix}. \]

Since

\[ \phi^T\begin{pmatrix}
0 & \alpha_1 & \alpha_2\\
\alpha_1 & 0 & \alpha_{3}\\
\alpha_4 & \alpha_3 & \alpha_5
\end{pmatrix}\phi =\begin{pmatrix}
\alpha^* & \alpha_1^* & \alpha_2^*\\
\alpha_1^* & 0 & \alpha_{3}^*\\
\alpha_4^* & \alpha_3^* & \alpha_5^*
\end{pmatrix}\]
the action of $\operatorname{Aut} (\mathcal{J}_{01}^{3*})$ on subspace
$\Big\langle \sum\limits_{i=1}^5 \alpha_i\nabla_i \Big\rangle$ is given by
$\Big\langle \sum\limits_{i=1}^5 \alpha_i^*\nabla_i \Big\rangle,$
where

\[\begin{array}{rcl}
\alpha^*_1&=&x^2(x \alpha_1+z\alpha_{3});\\
\alpha^*_2&=&u(x\alpha_1+z\alpha_{3}) + v (x \alpha_2 + y \alpha_{3} + z \alpha_5);\\
\alpha^*_3&=&x^2 v\alpha_{3};\\
\alpha^*_4&=&u(x\alpha_1+z\alpha_{3}) + v (x \alpha_4 + y \alpha_{3} + z \alpha_5);\\
\alpha^*_5&=&2uv\alpha_{3}+v^2 \alpha_{5}.
\end{array}\]

It is easy to see that the elements $\nabla_1$ and  $\alpha_2\nabla_2 +\alpha_4\nabla_4+\alpha_5\nabla_5$ give algebras with $2$-dimensional annihilator, which were described before.
Since we are interested only in new algebras, we have the following cases:

\begin{enumerate}
\item $\alpha_2 \neq \alpha_4,$ then:
\begin{enumerate}
    \item  if $\alpha_3\neq0,$
    then choosing
        $x=\frac{\alpha_4-\alpha_2}{\alpha_3}, \ y=\frac{x(\alpha_5 \alpha_1 - \alpha_2\alpha_3)} {\alpha_{3}^{2}}, \  z=-\frac{x\alpha_{1}}{\alpha_{3}}, \   u=-\frac{v\alpha_{5}}{2\alpha_{3}},$
        we have the representative
        $\langle \nabla_3+ \nabla_4 \rangle.$

\item if $\alpha_3=0, $ then $\alpha_1 \neq 0$ and:
  \begin{enumerate}
  \item if $\alpha_5 \neq 0,$ then choosing
            $x=\frac{(\alpha_4-\alpha_2)^2}{\alpha_1\alpha_5},  \ v = \frac{x(\alpha_4-\alpha_2)}{\alpha_5}, \ u =-\frac{v(x\alpha_2 + z\alpha_5)}{x\alpha_1},$
        we have the representative
    $\langle \nabla_1+ \nabla_4+\nabla_5 \rangle.$
    \item if  $\alpha_5 = 0,$ then choosing $v = \frac{x^2\alpha_1}{\alpha_4-\alpha_2}, \ u =-\frac{v\alpha_2}{\alpha_1},$
        we have the representative
    $\langle \nabla_1+ \nabla_4 \rangle.$
    \end{enumerate}
\end{enumerate}

\item $\alpha_2 = \alpha_4,$ then:
\begin{enumerate}
    \item if $\alpha_3\neq0,$
    then choosing
        $ z=-\frac{x\alpha_{1}}{\alpha_{3}}, \ y=- \frac{x\alpha_2 + z\alpha_5} {\alpha_{3}}, \   u=-\frac{v\alpha_{5}}{2\alpha_{3}},$
        we have the representative
        $\langle \nabla_3\rangle.$

\item if  $\alpha_3=0, $ then $\alpha_1 \neq 0$ and choosing
            $u =-\frac{v(x\alpha_2 + z\alpha_5)}{x\alpha_1},$
        we get the representatives $\langle \nabla_1+\nabla_5 \rangle$ and $\langle \nabla_1 \rangle$
        depending on whether $\alpha_5=0$ or not. Since $\nabla_1$ give an algebra with $2$-dimensional annihilator, we have only representatives $\langle \nabla_1+\nabla_5 \rangle.$

\end{enumerate}

\end{enumerate}

Now we have five new 4-dimensional nilpotent noncommutative Jordan algebras constructed from ${\mathcal J}^{3*}_{01}$:
\[{\mathcal J}^{4}_{02}, {\mathcal J}^{4}_{03}, {\mathcal J}^{4}_{04}, {\mathcal J}^{4}_{05}, {\mathcal J}^{4}_{06}.\]

The multiplication tables of these algebras can be found in Appendix.

\subsubsection{Central extensions of ${\mathcal J}^{3*}_{02}$}

Let us use the following notations:
\[
\nabla_1=[\Delta_{11}], \quad \nabla_2=[\Delta_{13}]+[\Delta_{31}], \quad \nabla_3=[\Delta_{21}],\quad \nabla_4=[\Delta_{22}], \quad  \nabla_5=[\Delta_{23}]+[\Delta_{32}] . \]

The automorphism group of ${\mathcal J}^{3*}_{02}$ consists of invertible matrices of the form
\[\phi_1=\begin{pmatrix}
x & 0 & 0\\
0 & v & 0\\
z & w & xv
\end{pmatrix} \quad \text{or} \quad  \phi_2=\begin{pmatrix}
0 & u & 0\\
y & 0 & 0\\
z & w & uy
\end{pmatrix}\]

Since
\[ \phi_1^T\begin{pmatrix}
\alpha_1 & 0 & \alpha_2\\
\alpha_3 & \alpha_4 & \alpha_{5}\\
\alpha_2 & \alpha_5 & 0
\end{pmatrix}\phi_1 =
\begin{pmatrix}
\alpha_{1}x^2+2\alpha_{2}xz & \alpha^* & \alpha_{2}x^2v \\
\alpha^* + \alpha_3xv& \alpha_{4}v^2+2\alpha_{5}vw & \alpha_{5}xv^2 \\
\alpha_{2}x^2v &\alpha_{5}xv^2& 0
\end{pmatrix},\]
the action of $\operatorname{Aut} (\mathcal{J}_{01}^{3*})$ on subspace
$\Big\langle \sum\limits_{i=1}^5 \alpha_i\nabla_i \Big\rangle$ is given by
$\Big\langle \sum\limits_{i=1}^5 \alpha_i^*\nabla_i \Big\rangle,$
where

\[\begin{array}{rcl}
\alpha^*_1&=&\alpha_{1}x^2+2\alpha_{2}xz;\\
\alpha^*_2&=&\alpha_{2}x^2v;\\
\alpha^*_3&=&\alpha_3xv;\\
\alpha^*_4&=&\alpha_{4}v^2+2\alpha_{5}vw;\\
\alpha^*_5&=&\alpha_{5}xv^2.
\end{array}\]

Note that $(\alpha_2; \alpha_5)\neq (0; 0)$ and since
\[ \phi_2^T\begin{pmatrix}
\alpha_1 & 0 & \alpha_2\\
\alpha_3 & \alpha_4 & \alpha_{5}\\
\alpha_2 & \alpha_5 & 0
\end{pmatrix}\phi_2 =
\begin{pmatrix}
\alpha_{4}y^2+2\alpha_{5}yz & \alpha^* & \alpha_{5}y^2u \\
\alpha^* - \alpha_3yu& \alpha_{1}u^2+2\alpha_{2}uw & \alpha_{2}yu^2 \\
\alpha_{5}y^2u &\alpha_{2}yu^2& 0
\end{pmatrix},\]
we can always assume $\alpha_2 \neq 0.$ 

Thus, we have the following cases:

\begin{enumerate}
\item $\alpha_3 \neq 0,$ then:
\begin{enumerate}
\item if $\alpha_5 \neq 0,$ then choosing $x=\frac{\alpha_3}{\alpha_2},$ $v=\frac{\alpha_3}{\alpha_5},$ $z=-\frac{x\alpha_1}{2\alpha_2},$ $w=-\frac{v\alpha_4}{2\alpha_5},$ we have the representative $ \langle \nabla_2+\nabla_3 +\nabla_5 \rangle. $

\item if $\alpha_5 = 0,$ then: 
\begin{enumerate}
 \item if $\alpha_4\neq 0,$  then choosing $x=\frac {\alpha_3} {\alpha_2},$ $v=\frac {\alpha_3^2} {\alpha_2\alpha_4},$ $z=-\frac{x\alpha_1}{2\alpha_2},$
        we have the representative $\langle  \nabla_2+\nabla_3 +\nabla_4\rangle.$
 \item if $\alpha_4= 0,$  then choosing $x=\frac {\alpha_3} {\alpha_2},$ $z=-\frac{x\alpha_1}{2\alpha_2},$
        we have the representative $\langle  \nabla_2+\nabla_3\rangle.$
\end{enumerate}

\end{enumerate}
\item $\alpha_3 = 0,$ then:
\begin{enumerate}
\item if $\alpha_5 \neq 0,$ then choosing  $v=\frac{x\alpha_2}{\alpha_5},$ $z=-\frac{x\alpha_1}{2\alpha_2},$ $w=-\frac{v\alpha_4}{2\alpha_5},$ we have the representative $ \langle \nabla_2+ \nabla_5 \rangle. $

\item if $\alpha_5 = 0,$  then choosing $z=-\frac{x\alpha_1}{2\alpha_2},$
        we have the representative $\langle  \nabla_2 \rangle$ and $\langle  \nabla_2 +\nabla_4\rangle$ depending on whether
        $\alpha_4=0$ or not.

\end{enumerate}

\end{enumerate}

Now we have six new 4-dimensional nilpotent noncommutative Jordan algebras constructed from ${\mathcal J}^{3*}_{02}$:

\[{\mathcal J}^{4}_{07}, {\mathcal J}^{4}_{08}, {\mathcal J}^{4}_{09}, {\mathcal J}^{4}_{10}, {\mathcal J}^{4}_{11}, {\mathcal J}^{4}_{12}.\]

The multiplication tables of these algebras can be found in Appendix.

\subsubsection{Central extensions of ${\mathcal J}^{3*}_{03}$}

Let us use the following notations:
\[
\nabla_1=[\Delta_{11}], \quad \nabla_2=[\Delta_{13}]-[\Delta_{31}], \quad \nabla_3=[\Delta_{21}],\quad \nabla_4=[\Delta_{22}], \quad  \nabla_5=[\Delta_{23}]-[\Delta_{32}] . \]

The automorphism group of ${\mathcal J}^{3*}_{03}$ consists of invertible matrices of the form

\[\phi=\begin{pmatrix}
x & u & 0\\
y & v & 0\\
z & w & xv-yu
\end{pmatrix}. \]

Since

\[ \phi^T\begin{pmatrix}
\alpha_1 & 0 & \alpha_2\\
\alpha_3 & \alpha_4 & \alpha_{5}\\
-\alpha_2 & -\alpha_5 & 0
\end{pmatrix}\phi =\begin{pmatrix}
\alpha_1^* & \alpha^* & \alpha_2^*\\
-\alpha^* + \alpha_3^* & \alpha_4^* & \alpha_{5}^*\\
-\alpha_2^* & -\alpha_5^* & 0
\end{pmatrix}\]
the action of $\operatorname{Aut} (\mathcal{J}_{01}^{3*})$ on subspace
$\Big\langle \sum\limits_{i=1}^5 \alpha_i\nabla_i \Big\rangle$ is given by
$\Big\langle \sum\limits_{i=1}^5 \alpha_i^*\nabla_i \Big\rangle,$
where

\[\begin{array}{rcl}
\alpha^*_1&=&\alpha_{1}x^2+\alpha_{3}xy+\alpha_{4}y^2;\\
\alpha^*_2&=&(\alpha_{2}x+\alpha_{5}y)(xv-yu);\\
\alpha^*_3&=&2\alpha_{1}ux+\alpha_{3}(xv+yu)+2\alpha_{4}yv;\\
\alpha^*_4&=&\alpha_{1}u^2+\alpha_{3}uv+\alpha_{4}v^2;\\
\alpha^*_5&=&(\alpha_{2}u+\alpha_{5}v)(xv-yu).
\end{array}\]

Since $(\alpha_2; \alpha_5)\neq (0; 0)$, we have the following cases:
\begin{enumerate}
\item  $\alpha_5=0,$ then choosing $u=0,$ we have  
\[\begin{array}{rcl}
\alpha^*_1&=&\alpha_{1}x^2+\alpha_{3}xy+\alpha_{4}y^2;\\
\alpha^*_2&=&\alpha_{2}x^2v;\\
\alpha^*_3&=&\alpha_{3}xv+2\alpha_{4}yv;\\
\alpha^*_4&=&\alpha_{4}v^2;\\
\alpha^*_5&=&0.
\end{array}\]

\begin{enumerate}
\item  $\alpha_4=\alpha_3= \alpha_1 =0,$ then we have the representative $ \langle \nabla_2 \rangle. $

\item $\alpha_4=\alpha_3=0,$ $\alpha_1\neq 0,$ then choosing $v=\frac {\alpha_1} {\alpha_2},$  
        we have the representative $\langle  \nabla_1+\nabla_2 \rangle. $
        
\item $\alpha_4=0,$ $\alpha_3\neq 0,$ then choosing $x=\frac {\alpha_3} {\alpha_2},$ $y=-\frac {x\alpha_1} {\alpha_3},$ 
        we have the representative $\langle  \nabla_2+\nabla_3 \rangle. $

\item $\alpha_4\neq0,$ then:
\begin{enumerate}
    \item if $4\alpha_1\alpha_4-\alpha_3^2=0,$ then choosing
    $y=-\frac{x\alpha_3}{2\alpha_4}, v=\frac{x^2\alpha_2}{\alpha_4},$
    we have the representative $\langle \nabla_2+\nabla_4 \rangle.$

\item if $4\alpha_1\alpha_4-\alpha_3^2\neq 0,$ then choosing
    $v=\frac{4\alpha_1\alpha_4-\alpha_3^2}{4\alpha_2\alpha_4}, x=\sqrt{\frac{v\alpha_4}{\alpha_2}}, y=-\frac{x\alpha_3}{2\alpha_4},$
    we have the representative $\langle \nabla_1+ \nabla_2+\nabla_4 \rangle.$
\end{enumerate}
\end{enumerate}

\item  $\alpha_5\neq0,$ then choosing $v=-\frac{u\alpha_2}{\alpha_5},$ we have
\[\begin{array}{rcl}
\alpha^*_1&=&\alpha_{1}x^2+\alpha_{3}xy+\alpha_{4}y^2;\\
\alpha^*_2&=&-\frac{(\alpha_{2}x+\alpha_{5}y)^2}{\alpha_5}u;\\
\alpha^*_3&=&(\frac{2\alpha_1\alpha_5-\alpha_2\alpha_3}{\alpha_5}x+\frac{\alpha_3\alpha_5-2\alpha_2\alpha_4}{\alpha_5}y)u;\\
\alpha^*_4&=&\frac{\alpha_1\alpha_{5}^2-\alpha_2\alpha_3\alpha_5+\alpha_{2}^2\alpha_4}{\alpha_{5}^2}u^2;\\
\alpha^*_5&=&0.
\end{array}\]

\begin{enumerate}
\item  $\alpha_4=0,$

\begin{enumerate}
\item  $\alpha_3=0,$ $\alpha_1=0,$
then we have the representative $ \langle \nabla_2 \rangle. $

\item  $\alpha_3=0,$ $\alpha_1\neq0,$ then choosing $x=0,$ $u=-\frac{y^2\alpha_5}{\alpha_1},$ we have the representative  $\langle \nabla_2+ \nabla_4 \rangle.$

\item  $\alpha_3\neq 0,$ $\alpha_1\alpha_5-\alpha_2\alpha_3=0,$ then choosing $x=0,$ $y=-\frac {\alpha_3}{\alpha_5},$ have the representative $\langle \nabla_2+ \nabla_3 \rangle.$

\item  $\alpha_3\neq 0,$ $\alpha_1\alpha_5-\alpha_2\alpha_3\neq0,$ then choosing $x=\frac {\alpha_3^2}{2(\alpha_1\alpha_5-\alpha_2\alpha_3)} i,$ $y=\frac {x(\alpha_2\alpha_3-2\alpha_1\alpha_5)}{\alpha_3\alpha_5},$  $u=\frac {\alpha_3^2}{2(\alpha_1\alpha_5-\alpha_2\alpha_3)},$
 have the representative $\langle \nabla_1 + \nabla_2+ \nabla_4 \rangle.$
\end{enumerate}

\item  $\alpha_4\neq0,$

\begin{enumerate}
\item  $\alpha_3\alpha_5-2\alpha_2\alpha_4=0,$ $\alpha_1\alpha_5^2 - \alpha_2^2\alpha_4=0,$ then choosing $u=-\frac{\alpha_4}{\alpha_5}$
then we have the representative $ \langle \nabla_1 + \nabla_2 \rangle. $

\item  $\alpha_3\alpha_5-2\alpha_2\alpha_4=0,$ $\alpha_1\alpha_5^2 - \alpha_2^2\alpha_4\neq0,$ choosing $x=0,$ $y=\frac{\sqrt{\alpha_4(\alpha_1\alpha_{5}^2-\alpha_{2}^2\alpha_4)}}{\alpha_5^2},$ $u=-\frac{\alpha_4}{\alpha_5}$ we have the representative  $\langle \nabla_1+\nabla_2+ \nabla_4 \rangle.$

\item  $\alpha_3\alpha_5-2\alpha_2\alpha_4\neq0,$ $\alpha_1\alpha_{5}^2-\alpha_2\alpha_3\alpha_5+\alpha_{2}^2\alpha_4= 0,$ then choosing $x=\frac {\alpha_4}{\alpha_5^2},$ $y=\frac{x(\alpha_2\alpha_4-\alpha_3\alpha_5)}{\alpha_4\alpha_5},$ have the representative $\langle \nabla_2+ \nabla_3 \rangle.$

\item  $\alpha_3\alpha_5-2\alpha_2\alpha_4\neq0,$ $\alpha_1\alpha_{5}^2-\alpha_2\alpha_3\alpha_5+\alpha_{2}^2\alpha_4\neq0,$ 
 then choosing
 $y = \frac{\alpha_2\alpha_3-2\alpha_1\alpha_5}{\alpha_3\alpha_5-2\alpha_2\alpha_4}x,$ we get the representatives  $\langle \alpha^*_1\nabla_1+ \alpha^*_2\nabla_2+\alpha^*_4\nabla_4 \rangle,$ where
 \[\begin{array}{rcl}
\alpha^*_1&=&\frac {(\alpha_1\alpha_{5}^2-\alpha_2\alpha_3\alpha_5+\alpha_{2}^2\alpha_4)(4\alpha_1\alpha_4-\alpha_3^2)} {(\alpha_3\alpha_5-2\alpha_2\alpha_4)^2}x^2;\\
\alpha^*_2&=& -\frac {4(\alpha_1\alpha_{5}^2-\alpha_2\alpha_3\alpha_5+\alpha_{2}^2\alpha_4)^2} {\alpha_5(\alpha_3\alpha_5-2\alpha_2\alpha_4)^2} x^2u;\\
\alpha^*_4&=&\frac{\alpha_1\alpha_{5}^2-\alpha_2\alpha_3\alpha_5+\alpha_{2}^2\alpha_4}{\alpha_{5}^2}u^2.
\end{array}\] 
 
 It gives us the representatives  $\langle \nabla_2+ \nabla_4 \rangle$ and
$\langle \nabla_1+ \nabla_2+ \nabla_4 \rangle$ depending on whether $4\alpha_1\alpha_4-\alpha_3^2=0$ or not.

\end{enumerate}

\end{enumerate}
\end{enumerate}

Thus, we have the following representatives of distinct orbits
$\langle\nabla_2\rangle,  \langle\nabla_1+ \nabla_2\rangle,  \langle\nabla_2+ \nabla_3\rangle, \langle\nabla_2+ \nabla_4\rangle$ and $\langle\nabla_1+ \nabla_2+ \nabla_4 \rangle,$
which give five new 4-dimensional nilpotent noncommutative Jordan algebras constructed from ${\mathcal J}^{3*}_{03}$:
\[{\mathcal J}^{4}_{13}, {\mathcal J}^{4}_{14}, {\mathcal J}^{4}_{15}, {\mathcal J}^{4}_{16}, {\mathcal J}^{4}_{17}.\]

The multiplication tables of these algebras can be found in Appendix.

\subsection{The algebraic classification of $4$-dimensional nilpotent noncommutative Jordan algebras}

Now we are ready to state the main result of this part of the paper.
The proof of the present theorem is  based on the classification of $3$-dimensional nilpotent noncommutative Jordan algebras and the results of Section \ref{centrext}.

\begin{theoremA}
There are only $18$ non-isomorphic complex $4$-dimensional nontrivial nilpotent noncommutative Jordan algebras, described explicitly in Appendix~A.
\end{theoremA}

\section{The geometric classification of nilpotent noncommutative Jordan algebras}

\subsection{Definitions and notation}
Given an $n$-dimensional vector space $\mathbb V$, the set ${\rm Hom}(\mathbb V \otimes \mathbb V,\mathbb V) \cong \mathbb V^* \otimes \mathbb V^* \otimes \mathbb V$
is a vector space of dimension $n^3$. This space has the structure of the affine variety $\mathbb{C}^{n^3}.$ Indeed, let us fix a basis $e_1,\dots,e_n$ of $\mathbb V$. Then any $\mu\in {\rm Hom}(\mathbb V \otimes \mathbb V,\mathbb V)$ is determined by $n^3$ structure constants $c_{ij}^k\in\mathbb{C}$ such that
$\mu(e_i\otimes e_j)=\sum\limits_{k=1}^nc_{ij}^ke_k$. A subset of ${\rm Hom}(\mathbb V \otimes \mathbb V,\mathbb V)$ is {\it Zariski-closed} if it can be defined by a set of polynomial equations in the variables $c_{ij}^k$ ($1\le i,j,k\le n$).

Let $T$ be a set of polynomial identities.
The set of algebra structures on $\mathbb V$ satisfying polynomial identities from $T$ forms a Zariski-closed subset of the variety ${\rm Hom}(\mathbb V \otimes \mathbb V,\mathbb V)$. We denote this subset by $\mathbb{L}(T)$.
The general linear group $\mathrm{GL}(\mathbb V)$ acts on $\mathbb{L}(T)$ by conjugations:
$$ (g * \mu )(x\otimes y) = g\mu(g^{-1}x\otimes g^{-1}y)$$
for $x,y\in \mathbb V$, $\mu\in \mathbb{L}(T)\subset {\rm Hom}(\mathbb V \otimes\mathbb V, \mathbb V)$ and $g\in \mathrm{GL}(\mathbb V)$.
Thus, $\mathbb{L}(T)$ is decomposed into $\mathrm{GL}(\mathbb V)$-orbits that correspond to the isomorphism classes of algebras.
Let $O(\mu)$ denote the orbit of $\mu\in\mathbb{L}(T)$ under the action of $\mathrm{GL}(\mathbb V)$ and $\overline{O(\mu)}$ denote the Zariski closure of $O(\mu)$.

Let $\mathcal A$ and $\mathcal B$ be two $n$-dimensional algebras satisfying the identities from $T$, and let $\mu,\lambda \in \mathbb{L}(T)$ represent $\mathcal A$ and $\mathcal B$, respectively.
We say that $\mathcal A$ degenerates to $\mathcal B$ and write $\mathcal A\to \mathcal B$ if $\lambda\in\overline{O(\mu)}$.
Note that in this case we have $\overline{O(\lambda)}\subset\overline{O(\mu)}$. Hence, the definition of a degeneration does not depend on the choice of $\mu$ and $\lambda$. If $\mathcal A\not\cong \mathcal B$, then the assertion $\mathcal A\to \mathcal B$ is called a {\it proper degeneration}. We write $\mathcal A\not\to \mathcal B$ if $\lambda\not\in\overline{O(\mu)}$.

Let $\mathcal A$ be represented by $\mu\in\mathbb{L}(T)$. Then  $\mathcal A$ is  {\it rigid} in $\mathbb{L}(T)$ if $O(\mu)$ is an open subset of $\mathbb{L}(T)$.
 Recall that a subset of a variety is called irreducible if it cannot be represented as a union of two non-trivial closed subsets.
 A maximal irreducible closed subset of a variety is called an {\it irreducible component}.
It is well known that any affine variety can be represented as a finite union of its irreducible components in a unique way.
The algebra $\mathcal A$ is rigid in $\mathbb{L}(T)$ if and only if $\overline{O(\mu)}$ is an irreducible component of $\mathbb{L}(T)$.

Given the spaces $U$ and $W$, we write simply $U>W$ instead of $\dim \,U>\dim \,W$.



\subsection{Method of the description of  degenerations of algebras}

In the present work we use the methods applied to Lie algebras in \cite{BC99,GRH,GRH2,S90}.
First of all, if $\mathcal A\to \mathcal B$ and $\mathcal A\not\cong \mathcal B$, then $\mathfrak{Der}(\mathcal A)<\mathfrak{Der}(\mathcal B)$, where $\mathfrak{Der}(\mathcal A)$ is the Lie algebra of derivations of $\mathcal A$. We compute the dimensions of algebras of derivations and check the assertion $\mathcal A\to \mathcal B$ only for such $\mathcal A$ and $\mathcal B$ that $\mathfrak{Der}(\mathcal A)<\mathfrak{Der}(\mathcal B)$.


To prove degenerations, we construct families of matrices parametrized by $t$. Namely, let $\mathcal A$ and $\mathcal B$ be two algebras represented by the structures $\mu$ and $\lambda$ from $\mathbb{L}(T)$ respectively. Let $e_1,\dots, e_n$ be a basis of $\mathbb  V$ and $c_{ij}^k$ ($1\le i,j,k\le n$) be the structure constants of $\lambda$ in this basis. If there exist $a_i^j(t)\in\mathbb{C}$ ($1\le i,j\le n$, $t\in\mathbb{C}^*$) such that $E_i^t=\sum\limits_{j=1}^na_i^j(t)e_j$ ($1\le i\le n$) form a basis of $\mathbb V$ for any $t\in\mathbb{C}^*$, and the structure constants of $\mu$ in the basis $E_1^t,\dots, E_n^t$ are such rational functions $c_{ij}^k(t)\in\mathbb{C}[t]$ that $c_{ij}^k(0)=c_{ij}^k$, then $\mathcal A\to \mathcal B$.
In this case  $E_1^t,\dots, E_n^t$ is called a {\it parametrized basis} for $\mathcal A\to \mathcal B$.

Since the variety of $4$-dimensional nilpotent noncommutative Jordan algebras  contains infinitely many non-isomorphic algebras, we have to do some additional work.
Let $\mathcal A(*):=\{\mathcal A(\alpha)\}_{\alpha\in I}$ be a series of algebras, and let $\mathcal B$ be another algebra. Suppose that for $\alpha\in I$, $\mathcal A(\alpha)$ is represented by the structure $\mu(\alpha)\in\mathbb{L}(T)$ and $B\in\mathbb{L}(T)$ is represented by the structure $\lambda$. Then we say that $\mathcal A(*)\to \mathcal B$ if $\lambda\in\overline{\{O(\mu(\alpha))\}_{\alpha\in I}}$, and $\mathcal A(*)\not\to \mathcal B$ if $\lambda\not\in\overline{\{O(\mu(\alpha))\}_{\alpha\in I}}$.

Let $\mathcal A(*)$, $\mathcal B$, $\mu(\alpha)$ ($\alpha\in I$) and $\lambda$ be as above. To prove $\mathcal A(*)\to \mathcal B$ it is enough to construct a family of pairs $(f(t), g(t))$ parametrized by $t\in\mathbb{C}^*$, where $f(t)\in I$ and $g(t)\in \mathrm{GL}(\mathbb V)$. Namely, let $e_1,\dots, e_n$ be a basis of $\mathbb V$ and $c_{ij}^k$ ($1\le i,j,k\le n$) be the structure constants of $\lambda$ in this basis. If we construct $a_i^j:\mathbb{C}^*\to \mathbb{C}$ ($1\le i,j\le n$) and $f: \mathbb{C}^* \to I$ such that $E_i^t=\sum\limits_{j=1}^na_i^j(t)e_j$ ($1\le i\le n$) form a basis of $\mathbb V$ for any  $t\in\mathbb{C}^*$, and the structure constants of $\mu_{f(t)}$ in the basis $E_1^t,\dots, E_n^t$ are such rational functions $c_{ij}^k(t)\in\mathbb{C}[t]$ that $c_{ij}^k(0)=c_{ij}^k$, then $\mathcal A(*)\to \mathcal B$. In this case  $E_1^t,\dots, E_n^t$ and $f(t)$ are called a parametrized basis and a {\it parametrized index} for $\mathcal A(*)\to \mathcal B$, respectively.

We now explain how to prove $\mathcal A(*)\not\to\mathcal  B$.
Note that if $\mathfrak{Der} \ \mathcal A(\alpha)  > \mathfrak{Der} \  \mathcal B$ for all $\alpha\in I$ then $\mathcal A(*)\not\to\mathcal B$.
One can use also the following generalization of Lemma from \cite{GRH}, whose proof is the same as the proof of Lemma.

\begin{lemma}\label{gmain}
Let $\mathfrak{B}$ be a Borel subgroup of $\mathrm{GL}(\mathbb V)$ and $\mathcal{R}\subset \mathbb{L}(T)$ be a $\mathfrak{B}$-stable closed subset.
If $\mathcal A(*) \to \mathcal B$ and for any $\alpha\in I$ the algebra $\mathcal A(\alpha)$ can be represented by a structure $\mu(\alpha)\in\mathcal{R}$, then there is $\lambda\in \mathcal{R}$ representing $\mathcal B$.
\end{lemma}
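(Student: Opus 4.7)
The plan is to adapt the classical Grunewald--O'Halloran argument word-for-word, the only change being that the orbit of a single structure is replaced by the union of orbits of a family $\{\mu(\alpha)\}_{\alpha\in I}$. The key idea is entirely algebraic-geometric: use that $\mathrm{GL}(\mathbb V)/\mathfrak{B}$ is a complete (projective) variety to deduce that the $\mathrm{GL}(\mathbb V)$-saturation of any $\mathfrak{B}$-stable closed set is itself closed.

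First I would introduce the saturation $\mathrm{GL}(\mathbb V)\cdot\mathcal{R}=\{g\cdot x\mid g\in\mathrm{GL}(\mathbb V),\,x\in\mathcal{R}\}$. The central claim, which is where the hypothesis on $\mathfrak{B}$ is used, is that $\mathrm{GL}(\mathbb V)\cdot\mathcal{R}$ is Zariski closed in $\mathbb{L}(T)$. To see this, form the contracted product $\mathrm{GL}(\mathbb V)\times^{\mathfrak{B}}\mathcal{R}$ (well-defined because $\mathcal{R}$ is $\mathfrak{B}$-stable); it is a fiber bundle with fiber $\mathcal{R}$ over the complete variety $\mathrm{GL}(\mathbb V)/\mathfrak{B}$. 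The natural evaluation map $[g,x]\mapsto g\cdot x$ from $\mathrm{GL}(\mathbb V)\times^{\mathfrak{B}}\mathcal{R}$ to $\mathbb{L}(T)$ is proper, because projection from a product with a complete variety is closed. Its image is precisely $\mathrm{GL}(\mathbb V)\cdot\mathcal{R}$, which is therefore closed.

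With closedness in hand, the rest is formal. Since each $\mu(\alpha)$ belongs to $\mathcal{R}$, one has $O(\mu(\alpha))=\mathrm{GL}(\mathbb V)\cdot\mu(\alpha)\subseteq\mathrm{GL}(\mathbb V)\cdot\mathcal{R}$ for every $\alpha\in I$, and consequently
\[
\overline{\bigcup_{\alpha\in I}O(\mu(\alpha))}\subseteq\overline{\mathrm{GL}(\mathbb V)\cdot\mathcal{R}}=\mathrm{GL}(\mathbb V)\cdot\mathcal{R}.
\]
By the hypothesis $\mathcal A(*)\to\mathcal B$, the structure $\lambda$ representing $\mathcal B$ lies in the left-hand side, so $\lambda=g_0\cdot\mu_0$ for some $g_0\in\mathrm{GL}(\mathbb V)$ and some $\mu_0\in\mathcal{R}$. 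Then $\mu_0=g_0^{-1}\cdot\lambda$ is isomorphic to $\lambda$ and thus represents $\mathcal B$ while lying in $\mathcal{R}$, which is exactly the conclusion.

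The only genuinely nontrivial ingredient is the closedness of the saturation, and this is a classical fact that already underlies the original lemma in \cite{GRH}; passing from a single $\mu$ to a family $\{\mu(\alpha)\}_{\alpha\in I}$ causes no difficulty because one only ever works with the union of their orbits and its closure. Consequently the argument is essentially verbatim the one in \cite{GRH}, which is why the paper is content to remark that the proof is the same.
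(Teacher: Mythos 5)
Your proof is correct and follows exactly the argument the paper has in mind: the paper gives no proof of its own but defers to the lemma of Grunewald--O'Halloran in \cite{GRH}, and your reconstruction (closedness of the $\mathrm{GL}(\mathbb V)$-saturation of a $\mathfrak{B}$-stable closed set via completeness of $\mathrm{GL}(\mathbb V)/\mathfrak{B}$, followed by the formal orbit-containment argument) is precisely that classical proof, correctly adapted to the union of orbits of a family. No gaps.
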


\subsection{The geometric classification of small dimensional noncommutative Jordan algebras}
Thanks to the description of all degenerations and orbit closures in the varieties of all $2$-dimensional algebras \cite{kv16} and all $3$-dimensional nilpotent algebras \cite{fkkv}, 
we have the following statements.

\begin{lemma}
The variety of complex $2$-dimensional noncommutative Jordan algebras has two irreducible components defined by the following algebras:

\begin{longtable}{llllllll}

         ${\bf E}_{1}(0,0,0,0)$ &$:$& $e_1 e_1 = e_1$ & $e_2 e_2=e_2$\\

        ${\bf E}_{5}(\alpha)$ &$:$& $e_1 e_1 = e_1$ & $e_1 e_2=(1-\alpha)e_1+\alpha e_2$  & $e_2 e_1=\alpha e_1+(1-\alpha) e_2$ & $e_2 e_2 = e_2$ \\
        \end{longtable}

\end{lemma}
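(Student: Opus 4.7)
The plan is to leverage the complete algebraic and geometric classification of all complex $2$-dimensional algebras obtained in \cite{kv16}, rather than redo the analysis from scratch. That reference provides a finite list of isomorphism classes (several parametric families among them) together with the full Hasse diagram of $\mathrm{GL}_2(\mathbb{C})$-orbit closures. The first step is to extract, from this list, precisely those algebras whose structure constants satisfy the flexibility identity $x(yx)=(xy)x$ and the Jordan identity $x^2(yx)=(x^2y)x$; this is a routine substitution giving a Zariski-closed subset $\mathbb{L}(T)\subset\mathbb{C}^{8}$ consisting of finitely many orbits (plus parametric families).

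Next I would directly verify, on the generic fibre, that $\mathbf{E}_1(0,0,0,0)$ and $\mathbf{E}_5(\alpha)$ both lie in $\mathbb{L}(T)$, by checking the two identities on the displayed multiplication tables; this is purely mechanical. Using the degeneration graph of \cite{kv16}, I would then read off that every other algebra appearing in the filtered list is a degeneration of either $\mathbf{E}_1(0,0,0,0)$ or of some member of the family $\mathbf{E}_5(\alpha)$. This gives the inclusion
\[
\mathbb{L}(T)=\overline{O(\mathbf{E}_1(0,0,0,0))}\cup\overline{\{O(\mathbf{E}_5(\alpha))\}_{\alpha\in\mathbb{C}}},
\]
so the variety is covered by the closures of these two families.

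The final step is to show that these two closed sets are genuinely distinct irreducible components, i.e.\ neither is contained in the other. For this I would compute the dimensions of the derivation algebras and the orbits: if $\mathfrak{Der}\,\mathbf{E}_1(0,0,0,0)$ and the generic $\mathfrak{Der}\,\mathbf{E}_5(\alpha)$ give orbits of the maximal possible dimension in $\mathbb{L}(T)$, then neither orbit closure can sit inside the other. Irreducibility of each piece is automatic: an orbit closure is always irreducible, and the closure of a one-parameter family indexed by $\mathbb{A}^1$ is irreducible as the image of an irreducible variety.

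The main obstacle I expect is separating the two components cleanly, since parametric families in dimension $2$ can specialize in unexpected ways; the cleanest way to rule out unwanted containments is to exhibit a Borel-stable closed subset of $\mathbb{L}(T)$ (in the spirit of Lemma~\ref{gmain}) that contains one candidate but not the other, e.g.\ a linear condition on structure constants preserved by upper-triangular $g\in\mathrm{GL}_2(\mathbb{C})$ that holds on $\mathbf{E}_5(\alpha)$ for all $\alpha$ but fails on $\mathbf{E}_1(0,0,0,0)$, or the reverse. Combined with the derivation-dimension test this will confirm that exactly two irreducible components arise.
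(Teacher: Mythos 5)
Your proposal is correct and follows essentially the same route as the paper, which gives no argument beyond invoking the complete classification of degenerations and orbit closures of $2$-dimensional algebras in \cite{kv16} and reading off the two components after restricting to the noncommutative Jordan identities. Your additional remarks on verifying the identities, on irreducibility of orbit closures and of one-parameter families, and on separating the components via derivation dimensions or Borel-stable subsets simply make explicit the checks that the citation to \cite{kv16} subsumes.
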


\begin{lemma}
The variety of complex $3$-dimensional nilpotent noncommutative Jordan algebras has two irreducible components defined by the following algebras:

\begin{longtable}{llllllll}

       ${\mathcal J}^{3*}_{04}(\lambda)$&$:$& $e_1 e_1 = \lambda e_3$  & $e_2 e_1=e_3$  & $e_2 e_2=e_3$ \\
       ${\mathcal J}^3_{01}$  &$:$& $e_1 e_1 = e_2$ & $e_1 e_2=e_3$ & $e_2 e_1= e_3$
        \end{longtable}
    \end{lemma}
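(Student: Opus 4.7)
The plan is to combine the algebraic classification carried out in Section 1 with the complete degeneration graph for all $3$-dimensional nilpotent algebras from \cite{fkkv}. From the algebraic work earlier in the paper, the full list of $3$-dimensional nilpotent noncommutative Jordan algebras is $\mathfrak{N}_3$, $\mathcal{J}^{2*}_{01}\oplus\mathbb{C}$, $\mathcal{J}^{3*}_{02}$, $\mathcal{J}^{3*}_{03}$, the one-parameter family $\mathcal{J}^{3*}_{04}(\lambda)$, and $\mathcal{J}^{3}_{01}$. So the variety is the union of the $\mathrm{GL}_3(\mathbb{C})$-orbits of these algebras, together with the closure of the family, and I must show that exactly two of these orbit closures are maximal (and distinct).

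First I would compute $\dim \mathfrak{Der}$ for each algebra, so that $\dim O(\mathcal{A}) = 9 - \dim\mathfrak{Der}(\mathcal{A})$. The expected outcome is that $\mathcal{J}^3_{01}$ has a $3$-dimensional derivation algebra, giving a $6$-dimensional orbit, while a generic $\mathcal{J}^{3*}_{04}(\lambda)$ has a $4$-dimensional derivation algebra and a $5$-dimensional orbit; adding the one-dimensional parameter yields a $6$-dimensional closure. All remaining algebras on the list will have strictly larger derivation algebras, hence strictly smaller orbits, so they can only contribute irreducible components if their closures are not contained in the two candidate closures.

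Next I would write down explicit parametrized bases realizing the degenerations
\[
\mathcal{J}^{3*}_{04}(\lambda)\to \mathcal{J}^{3*}_{02},\quad \mathcal{J}^{3*}_{04}(\lambda)\to \mathcal{J}^{3*}_{03},\quad \mathcal{J}^{3*}_{04}(*)\to \mathfrak{N}_3,\quad \mathcal{J}^3_{01}\to \mathcal{J}^{2*}_{01}\oplus\mathbb{C},
\]
(for instance $\mathcal{J}^{3*}_{04}(0)\to\mathcal{J}^{3*}_{03}$ is visible on inspection after a linear change of basis, and the degeneration into $\mathfrak{N}_3$ follows from the parametric index $f(t)=t$ with a diagonal rescaling $e_i\mapsto t e_i$). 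This places every algebra on the list inside one of the two candidate closures. Combined with the derivation-dimension bounds, this proves that $\overline{\{O(\mathcal{J}^{3*}_{04}(\lambda))\}_{\lambda\in\mathbb{C}}}\cup \overline{O(\mathcal{J}^3_{01})}$ covers the whole variety.

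Finally I would check that the two closures are incomparable, which yields that each is an irreducible component. One direction is free: since $\mathcal{J}^3_{01}$ has $\dim\mathfrak{Der} < \dim\mathfrak{Der}(\mathcal{J}^{3*}_{04}(\lambda))$ for every $\lambda$, no $\mathcal{J}^{3*}_{04}(\lambda)$ can degenerate to $\mathcal{J}^3_{01}$. The other direction is the main obstacle: one must rule out $\mathcal{J}^3_{01}\to \mathcal{J}^{3*}_{04}(\lambda)$ for all $\lambda$. Here I would appeal to Lemma~\ref{gmain} by choosing a Borel-stable closed subset $\mathcal{R}$ of $\mathbb{L}(T)$ containing a representative of $\mathcal{J}^3_{01}$ but avoiding every $\mathcal{J}^{3*}_{04}(\lambda)$; the natural candidate is the closed condition $\mu(e_1,e_1)\in \langle e_2,e_3\rangle$ with $\mu(e_1,e_1)\notin\langle e_3\rangle$ after passing to the appropriate flag, reflecting the fact that $\mathcal{J}^3_{01}$ is not $2$-step nilpotent while $\mathcal{J}^{3*}_{04}(\lambda)$ is. Once that invariant is established, both components are genuine and the lemma follows.
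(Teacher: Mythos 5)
Your strategy is sound, but it is genuinely different from what the paper does: the paper does not prove this lemma at all, it simply extracts it from the complete degeneration classification of $3$-dimensional nilpotent algebras in the cited reference (restricting that classification to the noncommutative Jordan algebras on the list from Section~1). Your reconstruction via derivation dimensions, explicit parametrized bases, and a separation argument is the standard machinery that underlies that reference, and your dimension counts are right: $\dim\mathfrak{Der}(\mathcal{J}^3_{01})=3$ gives a $6$-dimensional orbit, while each $\mathcal{J}^{3*}_{04}(\lambda)$ has $\dim\mathfrak{Der}=4$, a $5$-dimensional orbit, and a $6$-dimensional closure of the family. What your route buys is a self-contained proof; what the paper's route buys is brevity.

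Two steps of your sketch need repair. First, the degeneration $\mathcal{J}^{3*}_{04}(\lambda)\to\mathcal{J}^{3*}_{02}$ for a \emph{fixed} $\lambda$ is impossible: $\mathcal{J}^{3*}_{02}$ also has a $4$-dimensional derivation algebra, so its orbit has the same dimension $5$ and a proper degeneration would have to strictly drop the orbit dimension. It must be a family degeneration $\mathcal{J}^{3*}_{04}(*)\to\mathcal{J}^{3*}_{02}$, e.g. with parametrized index $f(t)=t^{-2}$ and basis $E_1^t=te_1$, $E_2^t=e_2$, $E_3^t=e_3$, whose limit is $e_1e_1=e_3$, $e_2e_2=e_3$, isomorphic to $\mathcal{J}^{3*}_{02}$ over $\mathbb{C}$. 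Second, the set you propose for Lemma~\ref{gmain}, defined by $\mu(e_1,e_1)\notin\langle e_3\rangle$, is not Zariski-closed, so that lemma does not apply to it. Fortunately you do not need it: both candidate closures are irreducible of dimension $6$, $\overline{O(\mathcal{J}^3_{01})}$ contains the dense orbit $O(\mathcal{J}^3_{01})$, and $\overline{\bigcup_\lambda O(\mathcal{J}^{3*}_{04}(\lambda))}$ contains the dense constructible set $\bigcup_\lambda O(\mathcal{J}^{3*}_{04}(\lambda))$; since these dense subsets are disjoint (the algebras are pairwise non-isomorphic), neither closure can contain the other, as an irreducible variety has no proper closed subset of full dimension. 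With these two corrections your argument goes through and the two closures are exactly the irreducible components.
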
    
        
\subsection{Rigid $n$-dimensional nilpotent noncommutative Jordan algebra}
As follows from \cite{Alb}, every one generated noncommutative Jordan algebra is associative;
and from \cite{kv17}, we conclude that an one generated algebra can not stay in the orbit closure of a non-one generated algebra (or a family of non-one generated algebras).
Then, summarizing we have the following lemma.

\begin{lemma}\label{le}
Any $n$-dimensional one generated nilpotent Jordan algebra is commutative and it is isomorphic to the following algebra
\[{\mathcal J}^{n} : e_{i}e_{j}=e_{i+j}, \quad 1 \leq i, j \leq{n}, \quad  i+j \leq{n}.\]
The algebra ${\mathcal J}^n$ is rigid in the variety of complex $n$-dimensional nilpotent noncommutative Jordan algebras.
\end{lemma}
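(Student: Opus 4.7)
The plan is to handle the two assertions separately and rely essentially on two external facts: Albert's theorem that a one--generated noncommutative Jordan algebra is associative \cite{Alb}, and the closedness property from \cite{kv17} that says one--generated algebras cannot lie in the orbit closure of algebras (or families of algebras) requiring more than one generator.

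For the first part, let $\mathbf{A}$ be an $n$--dimensional one--generated nilpotent noncommutative Jordan algebra, generated by some element $x$. By Albert's result $\mathbf{A}$ is associative, and an associative algebra generated by a single element is commutative (its elements are polynomials in $x$). Since $\mathbf{A}$ is nilpotent of dimension $n$, the powers $x,x^2,\ldots,x^n$ span $\mathbf{A}$ and, being the only candidates for a basis, must be linearly independent. Setting $e_i:=x^i$ and using associativity-commutativity gives $e_ie_j=x^{i+j}=e_{i+j}$ when $i+j\leq n$ and $0$ otherwise; this is precisely the multiplication defining $\mathcal{J}^n$.

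For the rigidity statement, suppose $\mathbf{B}$ is an $n$--dimensional nilpotent noncommutative Jordan algebra such that $\mathbf{B}\to \mathcal{J}^n$. Since $\mathcal{J}^n$ is one--generated (by $e_1$), the result of \cite{kv17} quoted above forces $\mathbf{B}$ itself to be one--generated. By the first part of the lemma, $\mathbf{B}\cong \mathcal{J}^n$. Hence no proper degeneration ends at $\mathcal{J}^n$, i.e. $\overline{O(\mathcal{J}^n)}$ is not properly contained in the closure of any other orbit inside $\mathbb{L}(T)$, so it is an irreducible component and $\mathcal{J}^n$ is rigid.

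The only potential obstacle is making sure the statement from \cite{kv17} applies verbatim in the variety of nilpotent noncommutative Jordan algebras of fixed dimension $n$ (not just in the ambient variety of all $n$--dimensional algebras); but since being one--generated is a $\mathrm{GL}_n(\mathbb{C})$--invariant closed condition formulated purely in terms of the structure constants, the inclusion into a smaller variety defined by polynomial identities poses no extra difficulty, and the same semicontinuity argument carries over directly.
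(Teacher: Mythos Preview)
Your proposal is correct and follows essentially the same route as the paper: invoke Albert \cite{Alb} to get associativity (hence commutativity) of a one-generated noncommutative Jordan algebra, deduce the unique model $\mathcal{J}^n$, and then use the fact from \cite{kv17} that a one-generated algebra cannot lie in the orbit closure of non-one-generated algebras (or families thereof) to conclude rigidity. One terminological slip: being one-generated is an \emph{open} condition on the structure constants (equivalently $\dim\mathbf{A}^2\ge n-1$), not a closed one --- and it is precisely openness that makes your rigidity argument go through, since then $O(\mathcal{J}^n)$ coincides with this open set.
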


\subsection{The geometric classification of $4$-dimensional nilpotent noncommutative Jordan algebras}
The main result of the present section is the following theorem.

\begin{theoremB}
The variety of complex $4$-dimensional  nilpotent noncommutative Jordan algebras has dimension $14$. It is definded by  $3$ rigid algebras and two one-parametric families of algebras, and can be described as the closure of the union of $\mathrm{GL}_4(\mathbb{C})$-orbits of the following algebras ($\alpha\in\mathbb{C}$):
\begin{longtable}{llllllll}

$ \mathfrak{N}_2(\alpha)$  &$:$& $e_1e_1 = e_3$ & $e_1e_2 = e_4$  &$e_2e_1 = -\alpha e_3$ & $e_2e_2 = -e_4$ \\
$\mathfrak{N}_3(\alpha)$  &$:$& $e_1e_1 = e_4$ & $e_1e_2 = \alpha e_4$  & $e_2e_1 = -\alpha e_4$ & $e_2e_2 = e_4$\\         
${\mathcal J}^4_{07}$&$:$& $e_1 e_2 = e_3$ & $e_1 e_3=e_4$  & $e_2 e_1=e_3+e_4$ & $e_2 e_3=e_4$ & $e_3 e_1=e_4$ & $e_3 e_2=e_4$\\
         ${\mathcal J}^4_{17}$&$:$& $e_1 e_1 = e_4$ & $e_1 e_2 = e_3$ & $e_2 e_1=-e_3$ & $e_1 e_3=e_4$ & $e_2 e_2=e_4$ & $e_3 e_1=-e_4$\\
        ${\mathcal J}^4_{18}$&$:$& $e_1 e_1 = e_2$ &$e_1 e_2 = e_3$ & $e_1 e_3=e_4$ & $e_2 e_1=e_3$ & $e_2 e_2=e_4$ & $e_3 e_1=e_4.$
\end{longtable}

\end{theoremB}

\begin{Proof}
The variety of $4$-dimensional noncommutative Jordan algebras has two principal subvarieties:
$4$-dimensional $2$-step nilpotent algebras and $4$-dimensional Jordan algebras.

Recall that the full description of the degeneration system of $4$-dimensional $2$-step nilpotent algebras was given in \cite{kppv}.
Using the cited result, we can see that the variety of $4$-dimensional non-pure noncommutative Jordan algebras has two irreducible components given by the following
families of algebras:

$$\begin{array}{lllllll}
\mathfrak{N}_2(\alpha)  & e_1e_1 = e_3, &e_1e_2 = e_4,  &e_2e_1 = -\alpha e_3, &e_2e_2 = -e_4 \\
\mathfrak{N}_3(\alpha)  & e_1e_1 = e_4, &e_1e_2 = \alpha e_4,  &e_2e_1 = -\alpha e_4, &e_2e_2 = e_4,  &e_3e_3 = e_4.
\end{array}$$

Thanks to \cite{fkkv}, 
the variety of $4$-dimensional nilpotent Jordan algebras is defined by two rigid algebras: 
${\mathcal J}^4_{10}$  and ${\mathcal J}^4_{18}.$

Now we can prove that the variety of $4$-dimensional nilpotent noncommutative Jordan algebras has five irreducible components.
Thanks to Lemma \ref{le}, the algebra ${\mathcal J}^4_{18}$ is rigid.
One can easily compute that
  $ \mathfrak{Der} \ {\mathcal J}^4_{07}=2$ and $\mathfrak{Der} \ {\mathcal J}^4_{17}=4.$
  It is follows that $ {\mathcal J}^4_{17} \not \to {\mathcal J}^4_{07}.$

The list of all necessary degenerations is given in Table B (see Appendix A)
and all needs arguments for non-degenerations are given below:

\begin{longtable}{|rcl|l|}
 
\hline
\multicolumn{3}{|c|}{\textrm{Non-degeneration}} & \multicolumn{1}{|c|}{\textrm{Arguments}}\\
\hline


${\mathcal J}^4_{07}$ & $\not \to$ & $\mathfrak{N}_2(\alpha), \mathfrak{N}_3(\alpha), {\mathcal J}^4_{17}$ & 
${\mathcal R}=
\left\{
A_2^2 \subseteq A_4, A_3^2 = 0,  c_{12}^3=c_{21}^3, c_{13}^4=c_{31}^4, c_{23}^4=c_{32}^4  
\right\}$ \\ 
\hline

${\mathcal J}^4_{17}$ & $\not \to$ & $\mathfrak{N}_2(\alpha), \mathfrak{N}_3(\alpha)$ & 
${\mathcal R}=
\left\{
A_3^2 = 0,  c_{11}^2=c_{11}^3=c_{22}^3=0  
\right\}$ \\ 
\hline

\end{longtable}

\end{Proof}

\section*{Appendix A.}

\begin{longtable}{llllllll}

\multicolumn{8}{c}{  {\bf Table A.}
{\it The list of $4$-dimensional "pure" nilpotent noncommutative Jordan algebras.}} \\ \\
        ${\mathcal J}^4_{01}$&$:$& $e_1 e_1 = e_2$ & $e_1 e_2=e_3$  & $e_2 e_1=e_3$ \\
        ${\mathcal J}^4_{02}$&$:$& $e_1 e_1 = e_2$ & $e_2 e_3=e_4$  & $e_3 e_2=e_4$ & $e_3 e_1=e_4$ \\
        ${\mathcal J}^4_{03}$&$:$& $e_1 e_1 = e_2$ & $e_1 e_2=e_4$  & $e_2 e_1=e_4$ & $e_3 e_1=e_4$ & $e_3 e_3=e_4$\\
        ${\mathcal J}^4_{04}$&$:$& $e_1 e_1 = e_2$ & $e_1 e_2=e_4$  & $e_2 e_1=e_4$ & $e_3 e_1=e_4$\\
        ${\mathcal J}^4_{05}$&$:$& $e_1 e_1 = e_2$ & $e_2 e_3=e_4$  & $e_3 e_2=e_4$\\
        ${\mathcal J}^4_{06}$&$:$& $e_1 e_1 = e_2$ & $e_1 e_2=e_4$  & $e_2 e_1=e_4$ & $e_3e_3 =e_4$\\
        ${\mathcal J}^4_{07}$&$:$& $e_1 e_2 = e_3$ & $e_1 e_3=e_4$  & $e_2 e_1=e_3+e_4$ & $e_2 e_3=e_4$ & $e_3 e_1=e_4$ & $e_3 e_2=e_4$\\
        ${\mathcal J}^4_{08}$&$:$& $e_1 e_2 = e_3$ & $e_1 e_3= e_4$ & $e_2 e_1=e_3+e_4$ & $e_2 e_2=e_4$ & $e_3 e_1=e_4$\\
        ${\mathcal J}^4_{09}$&$:$& $e_1 e_2 = e_3$ & $e_1 e_3= e_4$ & $e_2 e_1=e_3+e_4$ & $e_3 e_1=e_4$\\
        ${\mathcal J}^4_{10}$&$:$& $e_1 e_2 = e_3$ & $e_1 e_3= e_4$ & $e_2 e_1=e_3$ & $e_2 e_3=e_4$ & $e_3 e_1=e_4$  & $e_3 e_2=e_4$\\
        ${\mathcal J}^4_{11}$&$:$& $e_1 e_2 = e_3$ & $e_1 e_3= e_4$ & $e_2 e_1=e_3$ & $e_3 e_1=e_4$\\
        ${\mathcal J}^4_{12}$&$:$& $e_1 e_2 = e_3$ & $e_1 e_3= e_4$ & $e_2 e_1=e_3$ & $e_2 e_2=e_4$ & $e_3 e_1=e_4$\\
        ${\mathcal J}^4_{13}$&$:$& $e_1 e_2 = e_3$ & $e_1 e_3=e_4$  & $e_2 e_1=-e_3$  &$e_3 e_1=-e_4$ \\
        ${\mathcal J}^4_{14}$&$:$& $e_1 e_1 = e_4$ & $e_1 e_2 = e_3$ & $e_1 e_3=e_4$ & $e_2 e_1=-e_3$ & $e_3 e_1=-e_4$ \\
        ${\mathcal J}^4_{15}$&$:$& $e_1 e_2 = e_3$ &  $e_1 e_3=e_4$ & $e_2 e_1=-e_3+e_4$ & $e_3 e_1=-e_4$ \\
        ${\mathcal J}^4_{16}$&$:$& $e_1 e_2 = e_3$ &  $e_1 e_3=e_4$ & $e_2 e_1=-e_3$ & $e_2 e_2=e_4$ & $e_3 e_1=-e_4$ \\
        ${\mathcal J}^4_{17}$&$:$& $e_1 e_1 = e_4$ & $e_1 e_2 = e_3$ & $e_1 e_3=e_4$ & $e_2 e_1=-e_3$ & $e_2 e_2=e_4$ & $e_3 e_1=-e_4$\\
        ${\mathcal J}^4_{18}$&$:$& $e_1 e_1 = e_2$ &$e_1 e_2 = e_3$ & $e_1 e_3=e_4$ & $e_2 e_1=e_3$ & $e_2 e_2=e_4$ & $e_3 e_1=e_4$
\end{longtable}

\begin{longtable}{|lll|llll|}

\multicolumn{7}{c}{ \mbox{ {\bf Table B.}
{\it Degenerations of nilpotent noncommutative Jordan algebras of dimension $4$.}}} \\
\multicolumn{7}{c}{}\\

\hline
${\mathcal J}^{4}_{07} $&$\to$&$ {\mathcal J}^{4}_{02}$  & $E_1^t=-\frac 1 2 e_1+\frac 1 2 e_2 - \frac 1 2 e_3$ & $E_2^t=-\frac 1 2e_3 - \frac 1 2 e_4$&  $E_3^t= t e_2$& $ E_4^t=-\frac 1 2 t e_4$ \\
      
      \hline
      ${\mathcal J}^{4}_{08} $&$\to$&$ {\mathcal J}^{4}_{03}$  & 
    $E_1^t=te_1+\frac{1}{2}e_2$  &    
      $E_2^t=t e_3 +\frac{2t+1}{4}e_4$  & 
      $E_3^t=t e_2+\frac{2t-1}{4}e_3$ &
      $E_4^t=t^{2}e_4$\\
      
  \hline
      ${\mathcal J}^{4}_{03} $&$\to$&$ {\mathcal J}^{4}_{04}$  & $E_1^t=t e_1$ &   $E_2^t=t^{2}e_2$ &  $E_3^t=t^{2}e_3$ &  $E_4^t=t^{3}e_4$ \\

\hline
      ${\mathcal J}^{4}_{07} $&$\to$&$ {\mathcal J}^{4}_{08}$  & 
      \multicolumn{2}{l}{$E_1^t=(t^2+1)e_1+t^2e_2+\frac{t}{2}e_3 $} &  
      \multicolumn{2}{l|}{$E_2^t=t e_2 +\frac{t^2+1}{2}e_3$}\\  
      &&& \multicolumn{2}{l}{$E_3^t=t(t^2+1)e_3+\frac{1}{2}e_4$}& \multicolumn{2}{l|}{$E_4^t= t(t^2+1) e_4$} \\
    \hline
        ${\mathcal J}^{4}_{07} $&$\to$&$ {\mathcal J}^{4}_{09}$  & $E_1^t=e_1$ &$   E_2^t=t e_2$ &   $E_3^t=t e_3$ &  $E_4^t=t e_4$ \\
    \hline
        ${\mathcal J}^{4}_{07} $&$\to$&$ {\mathcal J}^{4}_{10}$  & $E_1^t=t^{-1} e_1$ &  $E_2^t=t^{-1} e_2$ &  $E_3^t=t^{-2}e_3$ &  $E_4^t=t^{-3}e_4$ \\    

    \hline
    ${\mathcal J}^{4}_{14} $&$\to$&$ {\mathcal J}^{4}_{13}$  & $E_1^t=e_1 $&$   E_2^t=t^{-1}e_2$ &$   E_3^t=t^{-1}e_3 $& $   E_4^t=t^{-1}e_4$ \\
    
    \hline
   
    ${\mathcal J}^{4}_{17} $&$\to$&$ {\mathcal J}^{4}_{14}$  & $E_1^t=t^{-1}e_1$ & $E_2^t=e_2$ &$  E_3^t=t^{-1}e_3 $& $E_4^t=t^{-2}e_4$ \\
    
              \hline
    ${\mathcal J}^{4}_{17} $&$\to$&$ {\mathcal J}^{4}_{15}$  & $E_1^t=2i e_1-2e_2$&$  E_2^t=t e_2$&$   E_3^t=2i t e_3-2t e_4$ &$ E_4^t=-4t e_4$ \\
    
              \hline
    ${\mathcal J}^{4}_{17} $&$\to$&$ {\mathcal J}^{4}_{16}$  & $E_1^t=t^{-1}e_1$ & $  E_2^t=t^{-2}e_2$ & $  E_3^t=t^{-3}e_3$ & $   E_4^t=t^{-4}e_4$ \\
    \hline

\end{longtable}

\end{document}